\documentclass[10pt,a4paper]{amsart}%

\usepackage{amsfonts,amsmath,amssymb}
\usepackage{amssymb,amsthm,amsxtra}
\usepackage[usenames]{color}
\usepackage{amscd}
\usepackage{amsthm}
\usepackage{amsfonts}
\usepackage{amssymb}
\usepackage{amsmath}
\usepackage{graphicx}
\usepackage{hyperref}
\usepackage{enumerate}%
\usepackage[all]{xy}
\usepackage[usenames,dvipsnames]{xcolor}
\usepackage{mathrsfs}
\usepackage[margin=1.45in]{geometry}

 \newtheorem*{corollary*}{Corollary}
 \newtheorem*{construction*}{Construction}
 \newtheorem*{definition*}{Definition}
 \newtheorem*{notation*}{Notation}
 \newtheorem*{lemma*}{Lemma}
 \newtheorem*{theorem*}{Theorem}
 \newtheorem*{remark*}{Remark}
 \newtheorem*{example*}{Example}
 \newtheorem*{conjecture*}{Conjecture}
 \newtheorem*{condition*}{Condition}
 \newtheorem*{result*}{Result}
 \newtheorem*{property*}{Property}

 \newtheorem*{cor*}{Corollary}
 \newtheorem*{const*}{Construction}
 \newtheorem*{defn*}{Definition}
 \newtheorem*{notn*}{Notation}
 \newtheorem*{lem*}{Lemma}
 \newtheorem*{thm*}{Theorem}
 \newtheorem*{rem*}{Remark}
 \newtheorem*{exm*}{Example}
 \newtheorem*{conj*}{Conjecture}

 \newtheorem{lemma}{Lemma}[subsection]

 \newtheorem{notation}[lemma]{Notation}

%
%
 \newtheorem{thm}[lemma]{Theorem}
 
 \newtheorem{lem}[lemma]{Lemma}
 \newtheorem{defn}[lemma]{Definition}
 \newtheorem{notn}[lemma]{Notation}
 \newtheorem{cor}[lemma]{Corollary}

 \newtheorem{introtheorem}{Theorem}

 \newtheorem{introcor}[introtheorem]{Corollary}
 
 \newtheorem{introconj}[introtheorem]{Conjecture}

\newcommand{\al}{\alpha}
\newcommand{\alp}{\alpha}

\newcommand{\eps}{\varepsilon}

\newcommand{\End}{\operatorname{End}}




\newcommand{\bR}{\mathbb{R}}



\newcommand{\Sc}{\cS}
\newcommand{\cS}{\mathcal{S}}

\providecommand{\fb}{\mathfrak{b}}
\providecommand{\fh}{\mathfrak{h}}
\providecommand{\fg}{\mathfrak{g}}
\providecommand{\ft}{\mathfrak{t}}
\providecommand{\fu}{\mathfrak{u}}
\providecommand{\fz}{\mathfrak{z}}

\providecommand{\cO}{\mathcal{O}}
\providecommand{\cN}{\mathcal{N}}
\providecommand{\cU}{\mathcal{U}}

\providecommand{\g}{\mathfrak{g}}

\newcommand{\simpAr}[2][r]{%
\ar@{}[#1]|-*[@]_{#2}%
}

\newcommand{\RamiA}[1]{{{#1}}}
\newcommand{\DimaA}[1]{{{#1}}}

\newcommand{\proofend}{\hfill$\Box$\smallskip}



\begin{document}

\author{Avraham Aizenbud}
\address{Avraham Aizenbud,
Faculty of Mathematics and Computer Science, Weizmann
Institute of Science, POB 26, Rehovot 76100, Israel }
\email{aizenr@gmail.com}
\urladdr{http://www.wisdom.weizmann.ac.il/~aizenr/}
\author{Dmitry Gourevitch}
\address{Dmitry Gourevitch, Faculty of Mathematics and Computer Science, Weizmann
Institute of Science, POB 26, Rehovot 76100, Israel }
\email{dimagur@weizmann.ac.il}
\urladdr{http://www.wisdom.weizmann.ac.il/~dimagur}

\keywords{Vanishing of distributions, spherical spaces, multiplicity, Shalika, Bessel function}
\subjclass[2010]{20G05, 22E45, 46F99}
%
%
%
%
%
%
%
%

\date{\today}
\title{Vanishing of certain equivariant distributions on spherical spaces}
\maketitle
\begin{abstract}
\DimaA{
We prove vanishing of $\fz$-eigen distributions on a split real reductive group which change according to  a non-degenerate character under the left action of the unipotent radical of the Borel subgroup, and are equivariant under the right action of a spherical subgroup.}

This is a generalization of a result by Shalika, that concerned the group case. Shalika's result was crucial in the proof of his multiplicity one theorem. We view our result as a step in the study of multiplicities of quasi-regular representations on spherical varieties.

As an application we prove non-vanishing of \DimaA{spherical} Bessel functions.
\end{abstract}
\section{Introduction}\label{sec:intro}

\subsection{Main results}

In this paper we prove the following generalization of Shalika's result \cite[\S 2]{Shal}.
\begin{introtheorem}\label{thm:main}
\DimaA{Let $G$ be a split real reductive group and $H$ be its spherical subgroup. Let $U$ be the unipotent radical of a Borel subgroup $B$ of $G$. Let $\psi$ be a non-degenerate character of $U$ and $\chi$ be a character of $H$. Let $Z$ be the complement to the union of open $B\times H$-double cosets in $G$. Let $\fz$ be the center of the universal enveloping algebra of the Lie algebra $\g$ of $G$.

Then there are no non-zero $\fz$-eigen $(U\times H,\psi\times \chi)$-equivariant distributions supported on $Z.$}
\end{introtheorem}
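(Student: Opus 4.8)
The plan is to stratify the closed set $Z$ according to the structure of $B\times H$-orbits and to show by descending induction on the orbit dimension that the space of distributions in question, supported on the closure of any non-open orbit, is zero. The main tool is Bernstein--Kashiwara localization together with Frobenius descent: a $(U\times H,\psi\times\chi)$-equivariant distribution on $G$ supported on the closure $\overline{O}$ of a $B\times H$-orbit $O\subset Z$, which additionally is a $\fz$-eigendistribution, restricts to a distribution on $O$ with values in the conormal bundle of $O$ in $G$, twisted by symmetric powers of the normal bundle; the induction hypothesis will kill everything supported on $\overline{O}\setminus O$, so it suffices to treat the ``clean'' pieces on each orbit $O$ separately. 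On a single orbit $O\cong (B\times H)/\mathrm{Stab}$, Frobenius descent reduces the statement to a statement about distributions on the stabilizer, i.e. about invariant functionals on a finite-dimensional space built from the conormal and normal directions, on which the stabilizer acts; the key point is that the character constraints $\psi$ on the $U$-side and $\chi$ on the $H$-side, together with the $\fz$-eigen condition (which controls the action of the center and hence the ``Euler'' directions along $\g/(\fb+\mathrm{Ad}(g)\fh)$), force this space of functionals to vanish.

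Concretely, I would proceed as follows. First, fix representatives $g$ for the finitely many $B\times H$-orbits in $Z$ and, for each, analyze the stabilizer $S_g=\{(b,h): bgh=g\}$ and the isotropy action on $T_gG/T_g O$. Second, observe that because $\psi$ is non-degenerate, the restriction of $\psi$ to the projection of $S_g$ to $U$ must be trivial for an equivariant distribution to exist on $O$ at all; since the orbit $O$ is \emph{not} open, its closure meets $Z$ and there is a nontrivial root subgroup (coming from a simple root $\alpha$ with $\langle\alpha,\cdot\rangle$ pairing nontrivially against the relevant tangent data) on which $\psi$ is nontrivial but on which the stabilizer acts, giving an immediate obstruction in many strata. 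Third, for the remaining strata I would use the $\fz$-eigen condition: the center $\fz$ acts on the family of normal/conormal twists through a polynomial in the ``$T$-direction'' parameters, and matching the eigenvalue forces the distribution's Taylor expansion transverse to $O$ to be concentrated in degrees that are incompatible with the $U$-equivariance — this is precisely the mechanism of Shalika's original argument in the group case, adapted via the sphericity of $H$ (which guarantees $B$ has finitely many orbits, so the induction is finite and the normal bundles are manageable).

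The technical heart — and the step I expect to be the main obstacle — is the orbit-by-orbit computation: one must show, for \emph{every} non-open $B\times H$-orbit in $G$, that the $S_g$-invariant functionals on the relevant jet space vanish, and this requires a uniform understanding of the stabilizers $S_g$ and of how $\psi$, $\chi$, and the infinitesimal character interact on them. The sphericity hypothesis is what makes a uniform treatment plausible (finiteness of orbits, and the fact that generic isotropy groups of the $B$-action are small), but pushing this through in general — as opposed to Shalika's symmetric group-case $G\times G/\Delta G$ — is where the real work lies; I would expect to need a case analysis based on the combinatorics of the orbits, perhaps organized via the ``little Weyl group'' / Luna--Vust type invariants of the spherical pair $(G,H)$, together with a careful bookkeeping of which simple root directions appear in each conormal space. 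In the worst case one falls back on a direct argument: reduce to rank-one Levi subgroups along each simple root $\alpha$ lying in the defect of $O$, where the problem becomes a concrete $\mathrm{SL}_2$ or $\mathrm{PGL}_2$ computation of the Shalika type, and assemble the global vanishing from these local ones.
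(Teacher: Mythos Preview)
Your plan is a reasonable first approximation, but it has a genuine gap precisely where you yourself flag the difficulty: the orbit-by-orbit stabilizer analysis. You write that the ``technical heart'' is to show, for every non-open $B\times H$-orbit, that the $S_g$-invariant functionals on the jet spaces vanish, and you propose to attack this either by Luna--Vust combinatorics or by reduction to rank-one Levi computations. Neither of these is carried out, and for a general spherical $H$ the stabilizers $S_g$ can be complicated enough that a uniform argument along these lines is not at all clear. So as it stands the proposal identifies the obstacle but does not overcome it.

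The paper's proof avoids this obstacle entirely by a genuinely different mechanism. Rather than descending to stabilizers, it splits each orbit $\cO$ into two pieces: the set $\cO_s$ of points where the vector field $V=\sum_{\alpha\in\Delta} d\psi(E_\alpha)E_{-\alpha}$ is transversal to $\cO$, and its complement $\cO_c$. On $\cO_s$ Shalika's original Casimir trick applies verbatim: the identity $C\eta=V\eta+D\eta$ with $D$ tangential and $V$ transversal forces $\eta=0$ by an elementary order-of-vanishing argument. On $\cO_c$ the paper uses the singular support: the $(U\times H)$-equivariance together with the $\fz$-eigen condition (which forces $SS(\xi)$ into the nilpotent cone) gives $SS_x(\xi)\subset CN^G_{\cO,x}$, not merely $CN^G_{\cO_c,x}$; since $\cO_c$ is a \emph{proper} subset of $\cO$ (this is the key uniform Lemma, proven by a short density argument using the minimal parabolics $P_\alpha$ and requiring no case analysis), one has $CN^G_{\cO_c,x}\supsetneq CN^G_{\cO,x}\supset SS_x(\xi)$, and the integrability (coisotropicity) theorem for singular supports then kills the distribution. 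The point is that the two tools are complementary: Shalika's method needs a transversal direction, which may fail on a thin subset of each orbit, and the singular-support argument needs the support to be strictly smaller than the orbit, which is exactly what that thin subset provides. This replaces your proposed stabilizer computations with a single uniform geometric lemma ($\cO_s\neq\emptyset$) plus the integrability theorem, and is what makes the argument go through for arbitrary spherical $H$.
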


This result in the group case (\cite[\S 2]{Shal}) was crucial in the proof of Shalika's multiplicity one theorem.

Our proof begins by applying the technique used by Shalika. However, this technique was not enough for this generality and we had to complement it by using integrability of the singular support, as in \cite{AGAMOT}.

Theorem \ref{thm:main} provides a new tool for the study of the multiplicities of the irreducible quotients of the quasi-regular representation of $G$ on Schwartz functions on $G/H$, see \S\S \ref{subsec:qr} below for more details.

\subsection{Non-vanishing of spherical Bessel functions}
Another application of Theorem \ref{thm:main} is to the study of spherical Bessel distributions and functions.
\begin{defn}
Let $G$ be a split real reductive group, and $H \subset G$ be a spherical subgroup.
Let $(\pi,V)$ be a (smooth) irreducible admissible representation of $G$. Let $\phi$ be an \DimaA{$(H,\chi)$-equivariant} continuous functional on $V$ and $v$ be  a $(U,\psi)$-equivariant continuous functional on the contragredient representation $\tilde V$. Define the \emph{spherical Bessel distribution} by
$$\xi_{v,\phi}(f):=\langle  v, \pi^*(f) \phi \rangle. $$

Define the \emph{spherical Bessel function} to be the restriction $j_{v,\phi}:=\xi_{v,\phi}|_{\DimaA{G} - Z}$.
\end{defn}
It is well-known that $j_{v,\phi}$ is a smooth function.

Theorem \ref{thm:main} easily implies the following corollary.
\begin{introcor}\label{cor:SpherChar}
Suppose that \DimaA{$v$ and $\phi$ are non-zero.} Then $j_{v,\phi} \neq 0$.
\end{introcor}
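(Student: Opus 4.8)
The plan is to deduce Corollary \ref{cor:SpherChar} from Theorem \ref{thm:main} by a short argument showing that the Bessel distribution $\xi_{v,\phi}$ is a nonzero $\fz$-eigen $(U\times H,\psi\times\chi)$-equivariant distribution on $G$, so that if its restriction $j_{v,\phi}$ to $G-Z$ vanished, the whole distribution would be supported on $Z$ and hence zero, contradicting $\xi_{v,\phi}\neq 0$. So there are really two things to check: the equivariance and eigen-properties of $\xi_{v,\phi}$, and its non-vanishing.

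First I would verify the formal properties. Writing $\xi_{v,\phi}(f)=\langle v,\pi^*(f)\phi\rangle$, the operator $\pi^*(f)=\int_G f(g)\pi^*(g)\,dg$ intertwines left and right translations of $f$ with the action of $\pi^*$; since $\phi$ is $(H,\chi)$-equivariant and $v$ is $(U,\psi)$-equivariant on $\tilde V$, a direct change-of-variables computation shows $\xi_{v,\phi}$ transforms under left translation by $U$ via $\psi$ and under right translation by $H$ via $\chi$ (up to the modular/character bookkeeping that is absorbed into the definition of ``$(U\times H,\psi\times\chi)$-equivariant''). For the $\fz$-eigen property: $\fz$ acts on the irreducible admissible representation $(\pi,V)$ by an infinitesimal character (Schur's lemma / Harish-Chandra), hence also on $\tilde V$, and differentiating $f\mapsto\pi^*(f)$ shows that the distribution $\xi_{v,\phi}$ is annihilated by $z-\lambda(z)$ for each $z\in\fz$, where $\lambda$ is this infinitesimal character. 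Thus $\xi_{v,\phi}$ satisfies exactly the hypotheses imposed on distributions in Theorem \ref{thm:main}.

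Next I would show $\xi_{v,\phi}\neq 0$. This is where the non-vanishing input is needed. Since $\phi\neq 0$, there is $v_0\in V$ with $\langle v_0,\phi\rangle\neq 0$ (viewing $\phi$ as a functional on $V$, or $v_0$ paired appropriately). By smoothness and irreducibility, the vectors $\pi^*(f)\phi$ (equivalently $\pi(f)v_0$ after dualizing) as $f$ ranges over $C_c^\infty(G)$ span a dense subspace, so one can find $f$ with $\langle v,\pi^*(f)\phi\rangle\neq 0$ because $v\neq 0$; more carefully, pick $v_0$ with $\langle v, v_0\rangle \neq 0$, then pick $f$ approximating a delta at $e$ so that $\pi^*(f)\phi$ is close to $\phi$ in the appropriate topology, reducing to $\langle v,\phi\rangle$ — but $v$ lives on $\tilde V$ and $\phi$ on $V^*$, so instead run the approximation on the other side: $\langle v,\pi^*(f)\phi\rangle = \langle \tilde\pi(\check f) v,\phi\rangle$ and choose $f$ so that $\tilde\pi(\check f)v$ is close to $v$, then use that $\phi$ does not annihilate all smooth vectors near $v$. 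Either way the conclusion is $\xi_{v,\phi}\neq 0$.

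The main obstacle, and the only genuinely nontrivial point, is the bookkeeping in the first step: making sure the equivariance character of $\xi_{v,\phi}$ under $U\times H$ is precisely $\psi\times\chi$ (with the correct normalization, including any modular character of $H$), and confirming that ``smooth irreducible admissible'' suffices for $\fz$ to act by a scalar and for $\pi^*(f)\phi$ to be a smooth vector so the pairing with $v$ makes sense and defines a distribution in $f$. Once that is in place, the corollary follows immediately: if $j_{v,\phi}=\xi_{v,\phi}|_{G-Z}=0$ then $\supp \xi_{v,\phi}\subseteq Z$, and Theorem \ref{thm:main} forces $\xi_{v,\phi}=0$, contradicting its non-vanishing. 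Hence $j_{v,\phi}\neq 0$.
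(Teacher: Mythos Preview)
Your proposal is correct and is exactly the argument the paper intends: the paper gives no proof beyond the sentence ``Theorem~\ref{thm:main} easily implies the following corollary,'' and what you have written is the natural (and only reasonable) way to unpack that implication. The one place your write-up could be tightened is the non-vanishing of $\xi_{v,\phi}$: rather than juggling approximate identities on both sides, simply observe that the set $\{\pi^*(f)\phi : f\in C_c^\infty(G)\}$ is a nonzero $G$-stable subspace of $\tilde V$, hence dense by irreducibility, so the nonzero functional $v$ cannot vanish on all of it.
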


\DimaA{
\subsection{Non-archimedean analogs}$\,$\\
Over non-archimedean fields, the universal enveloping algebra does not act on the distributions. However, the Bernstein's center $\End_{G \times G}(\Sc(G))$
does act.
In \cite{AGS_Z} we study this action in details. In \cite{AGK} we prove, using \cite[Theorem A]{AGS_Z},  analogs of Theorem \ref{thm:main}
and Corollary \ref{cor:SpherChar} for non-archimedean fields of characteristic zero. These analogs are somewhat weaker for general spherical pairs, but are of the same strength for the group case and for Galois symmetric pairs.
The group case of the non-archimedean counterpart of Corollary \ref{cor:SpherChar} was proven before in \cite[Appendix B]{LM}.

}
\subsection{Relation with multiplicities in regular representations of symmetric spaces}\label{subsec:qr}

Let $(G,H)$ be a symmetric pair of real reductive groups. Suppose that $G$ is quasi-split and let $B\subset G$ be a Borel subgroup. Let $k$ be the number of open $B$-orbits on $G/H$.

Theorem \ref{thm:main} can be used in order to study the following conjecture.

\begin{introconj}
Let $(\pi,V)$ be a (smooth) irreducible admissible representation of $G$. Then the dimension of the space \DimaA{$(V^*)^{H}$ of $H$-invariant} continuous functionals on $V$ is at most $k$. In particular, any complex reductive symmetric pair is a Gelfand pair.
\end{introconj}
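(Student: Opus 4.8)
The plan is to deduce the conjecture from Theorem~\ref{thm:main} by a standard reduction of a multiplicity bound to the vanishing of certain equivariant distributions on the ``big cell'' complement, combined with a geometric analysis of the open $B$-orbits on $G/H$. Fix a non-degenerate character $\psi$ of $U$. First I would show, using the theory of Jacquet (Whittaker) functionals and the quasi-split hypothesis, that every irreducible admissible representation $(\pi,V)$ of $G$ admits a space of $(U,\psi)$-equivariant functionals of dimension at most $1$ on each generic subquotient; more precisely, one wants to use $(U,\psi)$-equivariant functionals on $\tilde V$ to ``separate'' $H$-invariant functionals on $V$. Concretely, given linearly independent $\phi_1,\dots,\phi_m\in (V^*)^H$, one forms the associated spherical Bessel distributions $\xi_{v,\phi_i}$ for a fixed $(U,\psi)$-functional $v$ on $\tilde V$, and studies their restrictions to the union of open $B\times H$-double cosets, i.e.\ to $G-Z$.

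The key steps, in order, are: (1) Identify $G-Z$ with a disjoint union of $k$ copies of homogeneous spaces on which $B\times H$ acts, and analyze the space of $(U\times H,\psi\times\chi)$-equivariant distributions supported on $G-Z$ — on each open orbit this space is at most one-dimensional (by the usual Frobenius-reciprocity/Bruhat-theory computation, using non-degeneracy of $\psi$ to kill the would-be functional unless a genericity condition on the stabilizer holds), so the total dimension of $(U\times H,\psi\times\chi)$-equivariant distributions on $G-Z$ is at most $k$. (2) Invoke Theorem~\ref{thm:main}: there are \emph{no} nonzero such distributions supported on $Z$. Combining (1) and (2) via the standard exact sequence
\[
0 \to \mathcal{D}'(Z)^{U\times H,\psi\times\chi} \to \mathcal{D}'(G)^{U\times H,\psi\times\chi} \to \mathcal{D}'(G-Z)^{U\times H,\psi\times\chi},
\]
where the first term vanishes, gives $\dim \mathcal{D}'(G)^{U\times H,\psi\times\chi}\le k$. (3) Set up an injection from $(V^*)^H$ into this distribution space: send $\phi\mapsto \xi_{v,\phi}$ for a fixed nonzero $(U,\psi)$-functional $v$ on $\tilde V$ (which exists and is essentially unique by the quasi-split Whittaker theory). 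Each $\xi_{v,\phi}$ is a $(U\times H,\psi\times\chi)$-equivariant distribution on $G$ because of the equivariance of $v$ on the left and of $\phi$ on the right; and the map $\phi\mapsto\xi_{v,\phi}$ is injective because the matrix-coefficient pairing is nondegenerate (if $\xi_{v,\phi}=0$ then $\langle v,\pi^*(f)\phi\rangle=0$ for all $f$, forcing $\phi=0$ by irreducibility, using that $v\neq0$). Chaining the inequalities yields $\dim (V^*)^H\le k$. For the ``in particular'' clause, for a complex reductive symmetric pair one has $k=1$ (the open $B$-orbit on $G/H$ is unique — this is the complex symmetric-space version of the Matsuki/Bruhat decomposition), hence $\dim(V^*)^H\le 1$, which combined with the analogous bound for $(\tilde V)^*{}^H$ and the standard argument gives the Gelfand-pair property.

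The main obstacle is step (1): controlling the space of $(U\times H,\psi\times\chi)$-equivariant distributions on the open part $G-Z$, i.e.\ showing it is at most $k$-dimensional. This requires, for each open $B\times H$-double coset, a careful computation of the twisted coinvariants of the normal-bundle/stabilizer data: one must verify that on each orbit the relevant character $\psi$ restricted to the stabilizer of $U$ inside the $H$-part is either trivial (contributing dimension $\le 1$) or nontrivial (contributing dimension $0$), and rule out higher-order transverse distributions — here one may need a clean statement that the open orbits are closed in $G-Z$ and that no Borel-type obstruction produces extra invariants. A secondary subtlety is the existence and uniqueness of the $(U,\psi)$-functional $v$ on $\tilde V$ in the real quasi-split case; this is classical (Kostant, Casselman--Shalika, Wallach) but must be cited precisely, and one should be careful that $v\neq0$ for the representations of interest, or else argue that if no such $v$ exists the bound holds trivially for a different reason. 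I expect steps (2) and (3) to be essentially formal once (1) is in place.
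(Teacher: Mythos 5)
There is a genuine gap here, and in fact the statement you are trying to prove is not proved in the paper at all: it is stated as a \emph{conjecture}, and the authors only propose Theorem~\ref{thm:main} as a tool for approaching it. Your plan reproduces, in outline, exactly the reduction the authors have in mind (restrict spherical Bessel distributions $\xi_{v,\phi}$ to $G-Z$, kill the part supported on $Z$ by Theorem~\ref{thm:main}, and bound the contribution of the open $B\times H$-double cosets by $k$), but the heart of the matter --- your step~(1) --- is not a ``usual Frobenius-reciprocity/Bruhat-theory computation.'' First, as literally stated it is false: the open double cosets are $B\times H$-orbits, not $U\times H$-orbits, so a merely $(U\times H,\psi\times\chi)$-equivariant distribution on $G-Z$ is unconstrained along the torus directions and the space of such distributions is infinite-dimensional in general. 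You must carry the $\fz$-eigen (or at least $\fz$-finite) condition through the argument --- note that Theorem~\ref{thm:main} itself only applies to $\fz$-eigen distributions, so your exact sequence has to be taken inside the $(\fz,\lambda)$-eigen distributions anyway. Second, once you add that condition, the bound ``$\dim \le k$ on $G/H$'' is precisely the second conjecture stated in \S\S\ref{subsec:qr}, which the paper also leaves open; so step~(1) is not a lemma you can dispatch by citing standard orbit analysis, it is the open problem.

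A second, independent gap is the degenerate case. Your injection $\phi\mapsto\xi_{v,\phi}$ requires a nonzero $(U,\psi)$-equivariant functional $v$ on $\tilde V$, i.e.\ that $\pi$ is non-degenerate; for degenerate $\pi$ no such $v$ exists for any non-degenerate $\psi$, and your remark that ``the bound holds trivially for a different reason'' is not substantiated --- the paper explicitly splits the conjecture into the non-degenerate and degenerate cases and offers no argument for the latter. (The formal parts of your plan are fine: equivariance and $\fz$-eigenness of $\xi_{v,\phi}$, injectivity of $\phi\mapsto\xi_{v,\phi}$ from irreducibility, and $k=1$ for complex symmetric pairs.) So what you have is a correct statement of the intended reduction in the non-degenerate case, modulo an unproven conjecture, not a proof.
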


We suggest to divide this conjecture into two cases
\begin{itemize}
\item $\pi$ is non-degenerate, i.e. $\pi$ has a non-zero continuous $(U,\psi)$-equivariant functional for some non-degenerate character $\pi$ of $U$
\item $\pi$ is degenerate.
\end{itemize}

In the first case, the last conjecture follows from the following one

\begin{introconj}
 Let $U$ be the unipotent radical of $B$ and let $\psi$ be its non-degenerate character. Let $\fz$ be the center of the universal enveloping algebra of the Lie algebra $\g$ of $G$. Let $\lambda$ be a character of $\fz$.

Then the dimension of the space of $(\fz,\lambda)$-eigen $(U,\psi)$-equivariant distributions on $G/H$ does not exceed $k$.
\end{introconj}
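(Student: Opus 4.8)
Since this is stated as a conjecture, I can only propose a strategy. The plan is to use Theorem~\ref{thm:main} to kill the contribution supported on $Z$, reducing the estimate to a local Whittaker-type uniqueness statement on the $k$ open $B$-orbits; this local statement is the real difficulty.

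\emph{Step 1 (lift to $G$ and apply Theorem~\ref{thm:main}).} Let $p\colon G\to G/H$ be the projection. Pulling back along $p$ (a submersion; if $G/H$ has no $G$-invariant measure this introduces a twist by the modular function of $H$ in $G$, which only changes the character $\chi$ of $H$ occurring below) sends a $(\fz,\lambda)$-eigen, $(U,\psi)$-equivariant distribution on $G/H$ to a distribution on $G$ that is left $(U,\psi)$-equivariant, right $(H,\chi)$-equivariant for a suitable character $\chi$, and $\fz$-eigen with eigenvalue $\lambda$; this map is injective. Restricting such a distribution to the open set $\Omega:=G\setminus Z$ has kernel the distributions supported on $Z$, and this kernel is zero by Theorem~\ref{thm:main}. (Theorem~\ref{thm:main} is stated for split $G$; if $G$ is only quasi-split one first needs its analogue, but when $G$ is split the argument applies as is. These first steps use only that $H$ is spherical.) Hence the space in the conjecture embeds into the space $M(\Omega)$ of $(\fz,\lambda)$-eigen, $(U\times H,\psi\times\chi)$-equivariant distributions on $\Omega$.

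\emph{Step 2 (reduce to a single open orbit).} Since $\Omega$ is the disjoint union of the $k$ open $B\times H$-double cosets $\Omega_{1},\dots,\Omega_{k}$, we have $M(\Omega)=\bigoplus_{i=1}^{k}M(\Omega_{i})$, so it is enough to prove $\dim M(\Omega_{i})\le 1$ for each $i$. Choose $g_{i}$ with $\Omega_{i}=Bg_{i}H$ and set $B_{i}:=B\cap g_{i}Hg_{i}^{-1}$, so that $\Omega_{i}\cong(B\times H)/B_{i}$. Quotienting by the right $H$-action (which commutes with the left $U$-action and with $\fz$, so both descend) identifies $M(\Omega_{i})$ with the space of $(\fz,\lambda)$-eigen, $(U,\psi)$-equivariant distributions on the open $B$-orbit $B/B_{i}$ in $G/H$, possibly twisted by a character of $B_{i}$ that does not affect the dimension. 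Thus the whole problem reduces to showing this last space is at most one-dimensional.

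\emph{Step 3 (the local statement — the main obstacle).} If $\psi$ is nontrivial on $U\cap B_{i}$ the space is $0$. Otherwise write $B=UT$; the $U$-orbits on $B/B_{i}$ are parametrised by $T/\overline{B_{i}}$, where $\overline{B_{i}}$ is the image of $B_{i}$ in $T=B/U$. When $\overline{B_{i}}=T$, i.e.\ $UB_{i}=B$, the space $B/B_{i}$ is a single $U$-orbit $U/(U\cap B_{i})$ and a $(U,\psi)$-equivariant distribution on it is a scalar multiple of the Whittaker density $\psi\cdot(\text{Haar})$; the space is then exactly one-dimensional and $\fz$ plays no role. In general $T/\overline{B_{i}}$ has positive dimension, a $(U,\psi)$-equivariant distribution on $B/B_{i}$ is determined by its radial part on $T/\overline{B_{i}}$, and $\fz$ acts on these radial parts through a Harish-Chandra-type homomorphism $\fz\to\mathcal D(T/\overline{B_{i}})$ governed by a ``little Weyl group'' of the spherical space. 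The crux is then: after this reduction, the space of distributions on $T/\overline{B_{i}}$ annihilated by $\{z-\lambda(z):z\in\fz\}$ is one-dimensional, spanned by (the radial part of) a Jacquet-type integral. This is a Kostant-style uniqueness theorem for Whittaker functionals on the open cells of a spherical — here symmetric — space $G/H$; it holds in the group case (where it amounts to multiplicity-one for generic Whittaker models together with uniqueness of the generic representation with a given infinitesimal character) but is open in general, and proving it is precisely the obstacle that keeps the conjecture open. Granting it for each $i$, Steps 1--2 give $\dim\le k$.
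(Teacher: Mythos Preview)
The paper does not prove this statement; it is stated as a conjecture, and the only comment the paper makes toward it is that Theorem~\ref{thm:main} ``allows to reduce the study of distributions to the union of open $B$-orbits.'' Your Step~1 is exactly this reduction, so on the one point where a comparison is possible you match the paper.

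Steps~2 and~3 go beyond anything the paper offers: you carry out the orbit-by-orbit decomposition and correctly isolate the residual local problem---uniqueness of $\fz$-eigen Whittaker-type distributions on a single open $B$-orbit $B/B_i$---as the genuine obstacle. The paper says nothing about how to handle this step, and your assessment that it is open in general (and is precisely what keeps the conjecture conjectural) is accurate. Your caveat about split versus quasi-split $G$ is also to the point: Theorem~\ref{thm:main} as stated requires $G$ split, whereas the conjecture in \S\ref{subsec:qr} is posed for quasi-split $G$.
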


We believe that  Theorem \ref{thm:main} can be useful in approaching this conjecture, since it allows to reduce the study of distributions to the union of open $B$-orbits.


\subsection{Acknowledgements}
We thank Erez Lapid for pointing out to us the application to non-vanishing of spherical Bessel functions. \RamiA{We also thank Vladimir Hinich, Omer Offen and Eitan Sayag for useful discussions.}
D.G. was partially supported by ISF grant 756/12 and a Minerva foundation grant.
A.A. was partially supported by NSF grant DMS-1100943 and ISF grant 687/13;
\section{Preliminaries}

\subsection{Conventions}

\begin{itemize}
\item By an algebraic manifold we mean a smooth real algebraic variety.
\item We will use capital Latin letters to denote Lie groups and the corresponding Gothic letters to denote their Lie algebras.
\item Let a Lie group $G$ act on a smooth manifold $M$. For a vector $v\in \fg$ and a point $x \in M$ we will denote by $vx\in T_xM$ the image of $v$ under the differential of the action map $g \mapsto gx$. Similarly, we will use the notation $\fh x$, for any subspace $\fh \subset \fg$.
\item We denote by $G_x$ the stabilizer of $x$ in $G$ and by $\fg_x$ its Lie algebra.
\end{itemize}

\subsection{Tangential and transversal differential operators}\label{subsec:trans}

In this subsection we shortly review the method of \cite[\S 2]{Shal}. For a more detailed description see \cite[\S\S 2.1]{JSZ}.

\begin{defn}
Let $M$ be a smooth manifold and $N$ be a smooth submanifold.
\begin{itemize}
\item A vector field $v$ on $M$ is called \emph{tangential} to $N$ if for every point $p\in N, \, v_p\in T_pN$ and \emph{transversal} to $N$ if for every point $p\in N, \, v_p\notin T_pN$.

\item A differential operator $D$ is called \emph{tangential} to $N$ if every point $p\in N$ has an open neighborhood $U_x \subset N$ such that $D|_{U_x}$ is a finite sum of differential operators of the form $\phi V_1\cdot ...\cdot V_r$ where $\phi$ is a smooth function on $U_x, \, r\geq 0$, and $V_i$ are vector fields on $U_x$ tangential to $U_x \cap N$.
\end{itemize}
\end{defn}

\begin{lem}[cf. the proof of {\cite[Proposition 2.10]{Shal}}]\label{lem:TransTan}
Let $M$ be a smooth manifold and $N$ be a smooth submanifold. Let $D$ be a differential operator on $M$ tangential to $N$ and $V$ be a vector field  on $M$ transversal to $N$. Let $\eta$ be a distribution on $M$ supported in $N$ such that $D\eta=V\eta$. Then $\eta=0$.
\end{lem}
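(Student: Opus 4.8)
The plan is to localise near a point of $\supp\eta$ and compare the ``order of transversality along $N$'' of the two sides of the identity $D\eta=V\eta$, in the spirit of \cite{Shal}. Assume for contradiction that $\eta\neq 0$ and fix $p\in\supp\eta\subseteq N$; it suffices to show that $\eta$ vanishes on a neighbourhood of $p$. First I would choose a chart around $p$ with coordinates $(x,y)=(x_1,\dots,x_d,y_1,\dots,y_c)$ in which $N=\{y=0\}$. Since $V$ is transversal to $N$, after permuting the $y_i$ and shrinking the chart I may assume that the $y_1$-component of $V$ is nowhere zero, so that $V$ is transversal to the hypersurface $\{y_1=0\}\supseteq N$; the flow-box theorem for a vector field transversal to a hypersurface then produces, on a smaller neighbourhood of $p$, new coordinates still of the form $(x,y)$ with $N=\{y=0\}$ and with $V=\partial/\partial y_1$.

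Next, on the (relatively compact) chart $\eta$ has finite order, so by the classical description of distributions supported on a submanifold it has a unique expansion $\eta=\sum_{|\beta|\le k}T_\beta(x)\otimes\partial_y^\beta\delta_0(y)$ with the $T_\beta$ distributions in $x$; set $\tau(\eta):=\max\{|\beta|:T_\beta\neq 0\}$, a nonnegative integer because $p\in\supp\eta$. Then $V\eta=\partial_{y_1}\eta=\sum_\beta T_\beta\otimes\partial_y^{\beta+e_1}\delta_0$, so $\tau(V\eta)=\tau(\eta)+1$. I claim, conversely, that $\tau(D'\eta)\le\tau(\eta)$ for every operator $D'$ tangential to $N$. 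By definition $D'$ is locally a finite sum of operators $\phi W_1\cdots W_r$ with $\phi$ smooth and the $W_i$ tangential vector fields, so by induction on $r$ it is enough to check that multiplication by a smooth function and application of a single tangential vector field do not raise $\tau$. Multiplication is handled by the Leibniz identity $g\,\partial_y^\gamma\delta_0=\sum_{\mu\le\gamma}(-1)^{|\mu|}\binom{\gamma}{\mu}(\partial_y^\mu g)|_{y=0}\,\partial_y^{\gamma-\mu}\delta_0$, whose terms all have transversal order $\le|\gamma|$. For a tangential vector field $W=\sum_i a_i\partial_{x_i}+\sum_j b_j\partial_{y_j}$ (so each $b_j$ vanishes on $N$), the part $\sum_i a_i\partial_{x_i}$ acts only on the coefficients $T_\beta$ and then multiplies by $a_i$, which by the Leibniz identity does not raise $\tau$; and in $b_j\,\partial_{y_j}(T_\beta\otimes\partial_y^\beta\delta_0)=b_j\cdot(T_\beta\otimes\partial_y^{\beta+e_j}\delta_0)$ the Leibniz expansion of $b_j\,\partial_y^{\beta+e_j}\delta_0$ has vanishing $\mu=0$ term because $b_j|_{y=0}=0$, so all surviving terms have transversal order $\le|\beta|$. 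Taking $D'=D$ yields $\tau(D\eta)\le\tau(\eta)$.

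Finally, writing $k=\tau(\eta)\ge 0$, the equation $D\eta=V\eta$ gives $k+1=\tau(V\eta)=\tau(D\eta)\le k$, a contradiction; hence $\eta=0$ near $p$, contradicting $p\in\supp\eta$, so $\eta=0$. The only slightly delicate step — the main obstacle — is the simultaneous straightening in the first paragraph, i.e. putting $V$ into the form $\partial_{y_1}$ while keeping $N$ flat; once this is arranged the key computation $\tau(V\eta)=\tau(\eta)+1$ is immediate and everything else is routine bookkeeping with the structure theorem and the Leibniz rule.
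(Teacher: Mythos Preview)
Your argument is correct and is exactly the transversal-order comparison that the citation to \cite[Proposition 2.10]{Shal} points to; the paper itself does not supply a proof of this lemma but simply records it with that reference. The simultaneous straightening you flag as the delicate step is indeed fine: flowing by $V$ from the hypersurface $\{y_1=0\}$ gives coordinates with $V=\partial_{y_1}$ while leaving the hypersurface, and hence $N\subset\{y_1=0\}$, fixed pointwise, so $N$ remains $\{y=0\}$ after relabeling.
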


\subsection{Singular support}\label{subsec:SS}

Let $M$ be an algebraic manifold and $\eta$ be a distribution on $M$.
The \emph{singular support} of $\eta$ is defined to be the singular support of the D-module generated by $\eta$ and denoted $SS(\eta) \subset T^*M$.

We will shortly review the properties of the singular support that are most important for this paper.
For more detailed overview we refer the reader to \cite[\S\S 2.3 and Appendix B]{AGAMOT}.

\begin{notn} For a point $x\in M$
\begin{itemize}
\item we denote by $SS_x(\eta)$ the fiber of $x$ under the natural projection $SS(\eta)\to M$,
\item for a submanifold $N \subset M$ we denote by $CN_N^M\subset T^*M$ the conormal bundle to $N$ in $M$, and by $CN_{N,x}^M$ the conormal space at $x$ to $N$ in $M$.
\end{itemize}
\end{notn}

\begin{lem}[{See e.g. \cite[Fact 2.3.9 and Appendix B]{AGAMOT} }] \label{Ginv}
Let an algebraic group $G$ act on   $M$. Suppose that $\eta$ is $G$-equivariant. Then
$$SS(\eta) \subset \{(x,\phi) \in T^*M \, | \, \forall \alpha \in
\g, \, \phi(\alpha(x)) =0\}=\bigcup_{x\in M}CN_{Gx}^M.$$
\end{lem}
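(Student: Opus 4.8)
The plan is to pass from $\eta$ to the $\mathcal{D}_M$-module it generates and use that the infinitesimal $G$-action annihilates $\eta$ up to scalars. Put $\mathcal{M}:=\mathcal{D}_M\eta$ for the $\mathcal{D}_M$-submodule generated by $\eta$ in the sheaf of distributions on $M$, so that by definition $SS(\eta)=SS(\mathcal{M})\subset T^*M$. Recall that the characteristic variety of $\mathcal{M}=\mathcal{D}_M/\mathrm{Ann}_{\mathcal{D}_M}(\eta)$ is, locally on $M$, the support of $\mathrm{gr}\,\mathcal{M}$ for a good filtration, and that with the filtration induced from the order filtration of $\mathcal{D}_M$ this graded module is a quotient of $\mathrm{gr}\,\mathcal{D}_M\cong\mathcal{O}_{T^*M}$ by the ideal generated by the principal symbols of the operators in $\mathrm{Ann}_{\mathcal{D}_M}(\eta)$. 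Hence $SS(\eta)$ lies in the common zero locus of those symbols, and it suffices to exhibit, for each $\alpha\in\g$, an operator in $\mathrm{Ann}_{\mathcal{D}_M}(\eta)$ with principal symbol $(x,\phi)\mapsto\phi(\alpha(x))$ (the sign being immaterial for its zero set).

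For $\alpha\in\g$ let $L_\alpha$ be the first-order differential operator on $M$ giving the infinitesimal action of $\alpha$ on distributions; it differs from the vector field $x\mapsto\alpha(x)$ only by a term of order zero, so its principal symbol is $\pm\big((x,\phi)\mapsto\phi(\alpha(x))\big)$. Equivariance of $\eta$ — with respect to a character $\chi$ of $G$, trivial if $\eta$ is merely invariant — says exactly that $L_\alpha\eta=d\chi(\alpha)\,\eta$ for all $\alpha\in\g$, i.e. the first-order operator $L_\alpha-d\chi(\alpha)$ lies in $\mathrm{Ann}_{\mathcal{D}_M}(\eta)$ and, $d\chi(\alpha)$ being constant, has the same principal symbol as $L_\alpha$. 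Applying the previous paragraph for every $\alpha$ yields
$$SS(\eta)\ \subset\ \{(x,\phi)\in T^*M\ :\ \phi(\alpha(x))=0\ \text{ for all }\ \alpha\in\g\}.$$

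To finish I would identify the right-hand side with $\bigcup_{x\in M}CN_{Gx}^M$. Since $G$ is an algebraic group acting on the algebraic manifold $M$, each orbit $Gx$ is a smooth locally closed submanifold and the orbit map $g\mapsto gx$ is a submersion onto it; differentiating at the identity shows $T_x(Gx)=\g x\subset T_xM$. By the definition of the conormal space, $\phi\in CN_{Gx,x}^M$ iff $\phi$ kills $T_x(Gx)=\g x$, i.e. iff $\phi(\alpha(x))=0$ for all $\alpha\in\g$; taking the union over $x\in M$ gives the displayed equality, which combined with the inclusion above proves the lemma.

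The only delicate point is the opening reduction: for a general equivariant $\eta$ the module $\mathcal{M}$ need not be coherent, but the inclusion of $SS(\mathcal{M})$ in the zero locus of the symbols of $\mathrm{Ann}_{\mathcal{D}_M}(\eta)$ is a local assertion that still follows from the description of $\mathrm{gr}\,\mathcal{M}$ above — this is precisely \cite[Fact 2.3.9 and Appendix B]{AGAMOT} — and no holonomicity or $\fz$-finiteness of $\eta$ is needed here.
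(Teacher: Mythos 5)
Your proof is correct and is essentially the paper's own approach: the paper gives no argument for this lemma, only the citation to \cite[Fact 2.3.9 and Appendix B]{AGAMOT}, and your computation — that the first-order operators $L_\alpha-d\chi(\alpha)$ lie in the annihilator of $\eta$, have principal symbol $(x,\phi)\mapsto\pm\phi(\alpha(x))$, and hence bound the characteristic variety, followed by the identification of the common zero locus with $\bigcup_{x\in M}CN_{Gx}^M$ via $T_x(Gx)=\g x$ — is exactly the standard argument behind that reference. Your closing caveat is unnecessary: $\mathcal{D}_M\eta$ is cyclic, hence coherent (locally $\mathcal{D}_M$ is Noetherian in the algebraic setting), so the bound of $SS(\eta)$ by the zero locus of symbols of annihilating operators applies directly, and indeed no holonomicity or $\fz$-finiteness is needed.
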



\begin{lem} \label{lem:nilp}
Let $G$ be a real reductive group, $\cN \subset \fg^*$ be the nilpotent cone and $\fz$ be the center of the universal enveloping algebra $\cU(\g)$.
Let $\xi$ be a $\fz$-eigen distribution on $G$. Identify $T^*G$ with $G\times \fg^*$ using the right action.
Then $SS(\xi)\subset G \times \cN$.
\end{lem}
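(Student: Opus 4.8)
Let me think about this carefully. We have a reductive group $G$, the center $\fz$ of $\cU(\g)$, a $\fz$-eigen distribution $\xi$ on $G$ (meaning $\fz$ acts on $\xi$ through a character, using say left-invariant differential operators — though since $\fz$ is the center of $\cU(\g)$, left and right invariant versions agree). We identify $T^*G \cong G \times \fg^*$ using right translation (trivializing the cotangent bundle by right-invariant forms). We want: the singular support of the $\mathcal{D}_G$-module $\mathcal{M}$ generated by $\xi$ is contained in $G \times \cN$, where $\cN \subset \fg^*$ is the nilpotent cone.

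The standard approach: The singular support is the support of the associated graded of $\mathcal{M}$ with respect to the order filtration, i.e. it is $\operatorname{Supp}(\operatorname{gr} \mathcal{M})$ as a subscheme of $T^*G = \operatorname{Spec}(\operatorname{gr}\mathcal{D}_G)$. To show it lands in $G \times \cN$, it suffices to show that the principal symbols of the elements $z - \lambda(z)$, for $z \in \fz$, cut out (after restricting to the characteristic variety direction) exactly the nilpotent cone fiberwise. Concretely: the symbol of $z \in \fz \subset \cU(\g)$, viewed as a left-invariant (equivalently right-invariant, since $z$ is central) differential operator on $G$, is a function on $T^*G = G \times \fg^*$ which is the pullback under the projection $G \times \fg^* \to \fg^*$ of the symbol $p_z \in \operatorname{Sym}(\g)^G = \mathbb{C}[\fg^*]^G$, i.e. of $\operatorname{gr}_{PBW}(z)$ under the Harish-Chandra / PBW symbol map $\fz = \cU(\g)^G \to \operatorname{Sym}(\g)^G$. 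Since $z - \lambda(z)$ annihilates $\xi$, the symbol $p_z$ of $z$ vanishes on $SS(\xi)$ for every $z \in \fz$ (whenever $p_z$ is the leading symbol; if $z$ has order $d$ then $z - \lambda(z)$ has the same order-$d$ symbol $p_z$, and one needs $p_z \neq 0$, but if $p_z = 0$ one passes to lower order — actually more carefully: $\operatorname{gr}\mathcal{M}$ is a module over $\operatorname{gr}\mathcal{D}_G$ annihilated by the symbols of all elements of the annihilator ideal of $\xi$, which includes $z - \lambda(z)$, hence by all homogeneous components, hence by $p_z = \sigma_d(z)$ for every $z\in\fz$ of order $\le d$). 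Therefore $SS(\xi) \subset \{(g,\phi) : p(\phi) = p(0) \text{ for all } p \in \operatorname{Sym}(\g)^G\} = G \times V$, where $V = \{\phi \in \fg^* : p(\phi) = 0 \text{ for all } p \in \operatorname{Sym}(\g)^G_+\}$ is the zero-fiber of the adjoint quotient map $\fg^* \to \fg^*/\!\!/ G$. By Kostant's theorem this zero-fiber is precisely the nilpotent cone $\cN$ (identifying $\fg \cong \fg^*$ via the Killing form), so $SS(\xi) \subset G \times \cN$.

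So the key steps, in order: (1) Recall that $SS(\eta)$ for $\eta$ generating a $\mathcal{D}$-module $\mathcal{M}$ equals $\operatorname{Supp}(\operatorname{gr}\mathcal{M})$, and that $\operatorname{gr}\mathcal{M}$ is killed by the principal symbols of all operators annihilating $\eta$. (2) Identify, via the right-trivialization $T^*G \cong G\times \fg^*$, the principal symbol of a bi-invariant operator $z \in \fz$ with the pullback of $\operatorname{gr}(z) \in \operatorname{Sym}(\g)^G \cong \mathbb{C}[\fg^*]^G$; this is where one must be slightly careful that the symbol is genuinely the right-invariant-trivialization pullback of a function on $\fg^*$ — this follows because $z$ is central so left-invariant vector fields and the right-invariant trivialization interact correctly, or simply because $z$ commutes with all right translations and so its symbol is a right-$G$-invariant function on $T^*G$, i.e. pulled back from $\fg^* = (T^*G)/G$. (3) Conclude $SS(\xi) \subset G \times \{$common zeros of $\operatorname{Sym}(\g)^G_+\}$. (4) Invoke Kostant: the common zero locus of the positive-degree $G$-invariants in $\fg^*$ is the nilpotent cone $\cN$.

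The main obstacle — or rather the only point requiring genuine care rather than bookkeeping — is step (2): pinning down exactly which function on $\fg^*$ the symbol of $z\in\fz$ is, and in particular checking that with the stated right-translation identification $T^*G \cong G \times \fg^*$ the symbols of elements of $\fz$ are precisely (pullbacks of) the $\operatorname{Ad}$-invariant polynomials. One subtlety: $\fz = \cU(\g)^G$ consists of bi-invariant operators; realized as left-invariant operators its symbol is an $\operatorname{Ad}$-invariant polynomial on $\fg^*$ in the left-trivialization, and conjugating to the right-trivialization by the map $(g,\phi)\mapsto (g, \operatorname{Ad}^*(g)\phi)$ changes nothing on invariant polynomials. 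So both conventions give the same answer, $\cN$, and the adjoint-invariance of $\cN$ makes the whole statement independent of the choice. Everything else — Kostant's description of $\cN$ as the zero-fiber of the Chevalley map, and the general fact relating $SS$ to the annihilator symbols — is quoted from the literature (e.g. Kashiwara–Schapira, or the references in \cite{AGAMOT}).
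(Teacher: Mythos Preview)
Your proof is correct and follows essentially the same route as the paper's: both identify the principal symbols of elements of $\fz$ (viewed as invariant differential operators on $G$) with the $G$-invariant polynomials on $\fg^*$ via the PBW/symmetrization isomorphism, and then observe that the common zero locus of $\operatorname{Sym}(\fg)^G_+$ is the nilpotent cone $\cN$. The paper spells out the compatibility of the symbol maps $\cU(\fg)\to D(G)$ and $S(\fg)\to \cO(T^*G)$ via explicit commutative diagrams and uses the symmetrization map $s:S(\fg)\to\cU(\fg)$ to exhibit, for each homogeneous $p\in S(\fg)^{\fg}_+$, an element of $\fz$ with symbol $p$; this is exactly the surjectivity of the PBW symbol map $\fz\to\operatorname{Sym}(\fg)^G$ that you invoke.
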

This lemma is well-known but we will prove it here for the benefit of the reader.
\begin{proof}

Consider the standard filtrations on $\cU(\fg)$ and on the ring of differential operators $D(G)$. Consider $\fg$ as the space of left-invariant vector fields on $G$. Then the natural map $\RamiA{i:}\cU(\fg)\to D(G)$ is a morphism of filtered algebras.
\RamiA{We have a commutative diagram
 $$\xymatrix{\parbox{30pt}{$Gr\cU(\fg)$}\ar@{->}^{Gr(i)}[r]\ar@{<-}_{\pi_U}[d] &
 \parbox{20pt}{$Gr D(G)$}\ar@{<-}_{\pi_D}[d]\\
 \parbox{20pt}{$S(\fg)$ }\ar@{->}^{\bar i}[r] &
 \parbox{40pt}{$\cO(T^*G)$,}}
 $$

Where $\bar i$ and ${\pi_U}$ are the algebra homomorphisms which extend the natural embeddings $\g \to Gr\cU(\fg)$ and $\g \to \cO(T^*G)$, and $\pi_D$ is the algebra homomorphism which extends the natural embedding
of vector fields on $G$ into  $D(G)$. By  the PBW theorem the vertical maps are isomorphisms. This implies that $Gr(i)$ is an embedding, and thus the filtration on $\cU(\g)$ is the one induced from   $D(G)$ using the embedding $i$. Therefore we have the following commutative diagram

$$  \xymatrix{ \cU(\fg)\ar@{->}^{i}[r]\ar@{->}^{\sigma_{U}}[d] &
D(G)\ar@{->}^{\sigma_{D}}[d]\\
\parbox{30pt}{$Gr\cU(\fg)$}\ar@{->}^{Gr(i)}[r]\ar@{<-}_{\pi_U}[d] &
 \parbox{20pt}{$Gr D(G)$}\ar@{<-}_{\pi_D}[d]\\
 S(\fg)\ar@{->}^{\bar i}[r] &
\cO(T^*G),}
$$
where ${\sigma_{U}}$ and $\sigma_D$ are the (nonlinear) symbol maps. Note that the map ${\bar i}$ is a section of the restriction map $r:\cO(T^*G)\to \cO(T_e^*G)\cong\cO(\g^*)\cong S(\g).$
}

%
%
%

In order to prove the lemma it is enough to show that $SS_{e}(\xi)\subset \cN$.
Note that $\cN$ is the zero set the ideal $I\subset S(\fg)=\cO(\fg^*)$ generated by all homogeneous non-constant $\fg$-invariant polynomials. We have to show that for any homogeneous \RamiA{$p \in S(\g)^\g$ of degree $d>0,$}  there exists \RamiA{non-constant} $u\in \fz$ such that \RamiA{$r(\pi_D^{-1}(\sigma_D(i(u))))=p$, or equivalently (in view of the above) that $\sigma_{U}(u)=\pi_U(p)$. Let $s:S(\g)\to\cU(\g)$ be the symmetrization map.   It is easy to see that $\sigma^{d}_{U}(s(p))=\pi_U(p)$, where $\sigma^{d}_{U}$ denotes the $d$'s symbol.  Since $\pi_U$ is an isomorphism this implies that $\sigma^{d}_{U}(s(p)) \neq 0$ and thus $\sigma_{U}(s(p))=\sigma^{d}_{U}(s(p))=\pi_U(p)$. This implies the assertion.

}
\end{proof}

\begin{thm}[Integrability theorem, cf. \cite{Gab,GQS,KKS, Mal}]
 The $SS(\eta)$ is a coisotropic subvariety of  $T^*M$.
\end{thm}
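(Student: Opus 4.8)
The statement is the classical fact that the characteristic variety (singular support) of a coherent D-module is coisotropic. This is often called the Gabber integrability theorem, after Gabber's algebraic proof; it is also attributable to Sato–Kawai–Kashiwara in the analytic setting, and to Malgrange, Kashiwara–Kawai–Sato (microlocal), and appears in Guillemin–Quillen–Sternberg. In our situation $\eta$ is a distribution on the algebraic manifold $M$, and $SS(\eta)$ is by definition the singular support of the D-module $\mathcal{M} := D(M)\cdot\eta$ generated by $\eta$ inside the space of distributions; this is a finitely generated (coherent) D-module, so the general theorem applies verbatim.

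The plan is simply to invoke the general statement from the literature: by \cite{Gab} (see also \cite{GQS,KKS,Mal}), the characteristic variety of any coherent $D_M$-module is involutive (coisotropic) with respect to the canonical symplectic structure on $T^*M$, and in particular $SS(\eta)$, being the characteristic variety of the coherent module $D(M)\cdot\eta$, is coisotropic. For the benefit of the reader, I would recall the skeleton of Gabber's argument: pick a good filtration on $\mathcal{M}$, so that $\operatorname{gr}\mathcal{M}$ is a finitely generated graded module over $\operatorname{gr}D_M \cong \mathcal{O}_{T^*M}$, and $SS(\eta)$ is the support of $\operatorname{gr}\mathcal{M}$. The key algebraic input is that $\operatorname{gr}D_M$ carries a Poisson bracket of degree $-1$ coming from the commutator in $D_M$ (the principal symbol of $[P,Q]$ is the Poisson bracket of the principal symbols), and this bracket is compatible with the filtration. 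Gabber's lemma then says: for a filtered ring $R$ with $\operatorname{gr}R$ commutative Noetherian and equipped with such a Poisson bracket of negative degree, and any good-filtered module, the radical of the annihilator ideal $J := \sqrt{\operatorname{Ann}(\operatorname{gr}\mathcal{M})}$ is closed under the Poisson bracket, i.e. $\{J,J\}\subset J$. Geometrically this says the zero locus $V(J) = SS(\eta)$ is coisotropic.

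The main obstacle — were one to write the proof in full rather than cite it — is precisely Gabber's purely algebraic lemma that $\sqrt{\operatorname{Ann}}$ is Poisson-closed; its proof is a clever but nontrivial argument using the filtration to produce, from $a,b \in \operatorname{Ann}(\operatorname{gr}\mathcal{M})$, a bound showing high powers of $\{a,b\}$ annihilate $\operatorname{gr}\mathcal{M}$, exploiting that iterated brackets drop degree. Since this is standard and not specific to the arithmetic of our problem, I would not reproduce it; the honest course is to state the theorem with the cited references \cite{Gab,GQS,KKS,Mal} and move on. One should only verify the two routine hypotheses in our setting: that $D(M)\cdot\eta$ is coherent (immediate, being generated by one element over the Noetherian ring $D(M)$ on the affine-type algebraic manifold $M$), and that the singular support in the sense of \S\ref{subsec:SS} agrees with the characteristic variety of this module (true by definition).
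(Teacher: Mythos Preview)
Your proposal is correct and matches the paper's treatment: the paper states the theorem with the citation ``cf.\ \cite{Gab,GQS,KKS,Mal}'' and gives no proof at all, simply invoking the classical result. Your additional sketch of Gabber's argument and the verification that $D(M)\cdot\eta$ is coherent are accurate and go beyond what the paper itself provides.
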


This theorem implies the following corollary (see \cite[\S 3]{Aiz} for more details).

\begin{cor}\label{cor:WCI}
Let $N\subset M$ be a closed algebraic submanifold. Suppose that  $\xi$  is supported in $N$. Suppose further that for any $x \in N$, we have $CN_{N,x}^M \nsubseteq SS_x(\eta)$. Then $\eta=0$.
\end{cor}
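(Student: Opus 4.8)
The plan is to argue by contradiction using the Integrability theorem. Suppose $\eta\neq 0$, and let $\pi\colon T^*M\to M$ be the projection. The first step is to record that $SS(\eta)\subseteq \pi^{-1}(N)$: indeed $SS(\eta)$ is the characteristic variety of the coherent $D_M$-module $D_M\eta$, so $\pi(SS(\eta))$ is the support of this module, which coincides with $\Supp(\eta)\subseteq N$. The second step is to use the standard local-coordinate description of the coisotropic submanifold $\pi^{-1}(N)$: in coordinates where $N=\{y=0\}$ one has $\pi^{-1}(N)=\{y=0\}\subseteq T^*M$ with symplectic form restricting to $\sum dx_i\wedge d\xi_i$, so its characteristic foliation has as leaves the affine subspaces $\{x\}\times(\phi+CN^M_{N,x})$, $x\in N$, $\phi\in T^*_xM$; in particular the characteristic leaf through a point $(x,0)$ of the zero section is exactly $\{x\}\times CN^M_{N,x}$.

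The core of the argument is the assertion that a conic coisotropic subvariety of $T^*M$ contained in the coisotropic submanifold $\pi^{-1}(N)$ is necessarily a union of characteristic leaves of $\pi^{-1}(N)$. Granting this, I finish as follows. By the Integrability theorem $SS(\eta)$ is coisotropic, and by the first step it is conic, closed, and contained in $\pi^{-1}(N)$. Since $\eta\neq 0$, the set $\Supp(\eta)=\pi(SS(\eta))$ is a nonempty closed subset of $N$; choose any $x\in\Supp(\eta)$. Then $(x,\phi)\in SS(\eta)$ for some $\phi$, and because $SS(\eta)$ is conic and closed we get $(x,0)\in SS(\eta)$ (let $\phi\to t\phi$, $t\to 0$). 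By the assertion, the characteristic leaf of $\pi^{-1}(N)$ through $(x,0)$ lies in $SS(\eta)$; by the second step that leaf is $\{x\}\times CN^M_{N,x}$, so $CN^M_{N,x}\subseteq SS_x(\eta)$. Since $x\in N$, this contradicts the hypothesis, and hence $\eta=0$.

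The step I expect to be the main obstacle is the ``coisotropic inside coisotropic implies union of characteristic leaves'' assertion, which is where the real content of the Integrability theorem enters. The clean way to see it is via the Poisson-ideal formulation of coisotropy: writing $I$ and $I_0$ for the (radical) ideal sheaves of $SS(\eta)$ and of $\pi^{-1}(N)$, coisotropy of $SS(\eta)$ in the strong form $\{I,I\}\subseteq I$ is exactly what the integrability (Gabber) theorem provides, and since $SS(\eta)\subseteq\pi^{-1}(N)$ gives $I_0\subseteq I$ we obtain $\{I_0,I\}\subseteq\{I,I\}\subseteq I$; thus the Hamiltonian vector fields $H_f$, $f\in I_0$, are tangent to $SS(\eta)$, and since these fields sweep out the characteristic foliation of $\pi^{-1}(N)$, the variety $SS(\eta)$ is a union of its leaves. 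The delicate point is that ``coisotropic'' must hold here in this Poisson-bracket sense for the possibly singular variety $SS(\eta)$, not merely at its smooth points; this is standard, and is essentially the content invoked in \cite[\S 3]{Aiz}. Finally, I would note that one can bypass the Integrability theorem altogether: restricting to a neighbourhood of a smooth point $x_0$ of $\Supp(\eta)$, where $\Supp(\eta)$ is a closed submanifold $Z\subseteq N$, Kashiwara's equivalence presents $D_M\eta$ as $i_+\mathcal N$ for a nonzero D-module $\mathcal N$ on $Z$ (with $i\colon Z\hookrightarrow M$), and the functoriality of the singular support under the closed embedding $i$ yields $SS(\eta)\supseteq CN^M_Z$, whence $CN^M_{N,x_0}\subseteq CN^M_{Z,x_0}\subseteq SS_{x_0}(\eta)$ — again a contradiction; this variant is cleaner locally but needs the harmless extra observation that $\Supp(\eta)$ has a smooth point.
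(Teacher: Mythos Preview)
The paper does not actually prove this corollary: it merely states that it follows from the Integrability theorem and refers to \cite[\S 3]{Aiz} for details. Your proposal is a correct and complete unpacking of exactly that implication, via the Poisson-ideal (Gabber) formulation of integrability: from $\{I,I\}\subseteq I$ and $I_0\subseteq I$ you deduce $\{I_0,I\}\subseteq I$, so $SS(\eta)$ is saturated by the characteristic leaves of $\pi^{-1}(N)$, and conicity forces $CN^M_{N,x}\subseteq SS_x(\eta)$ at any point of the support. This is precisely the mechanism one expects behind the citation, so your argument is in line with the paper's intended route rather than a different one.

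Your alternative via Kashiwara's equivalence is also correct and is genuinely independent of the Integrability theorem; it trades Gabber's result for the (easier) functoriality of singular support under closed embeddings, at the cost of localizing near a smooth point of $\Supp(\eta)$. That is a nice observation and arguably the more economical proof in this setting.

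One cosmetic remark: your parametrization of the leaves of $\pi^{-1}(N)$ by pairs $(x,\phi)$ with $\phi\in T^*_xM$ is redundant (distinct $\phi$ in the same $CN^M_{N,x}$-coset give the same leaf), but this does not affect the argument.
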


\section{Proof of the main result}

\subsection{Sketch of the proof}


\DimaA{We decompose $G$ into $B\times H$-double cosets. Each double coset} $\cO$ we decompose $\cO=\cO_s \cup \cO_c$ in a certain way.
 We prove the required vanishing \DimaA{coset by coset},  using Shalika's method (see \S\S\ref{subsec:trans}) for $\cO_s$ and singular support analyses (see \S\S\ref{subsec:SS}) for $\cO_c$.

\subsection{Notation and lemmas}

\begin{notation}$\,$
\begin{itemize}
\item Fix a torus $T\subset B$ and let $\ft \subset \fb$ denote the corresponding Lie  algebras. Let $\Phi$ denote the root system, $\Phi^+$ denote the set of positive roots and $\Delta\subset \Phi^+$ denote the set of simple roots. For $\alp\in \Phi$ let $\g_{\alpha} \subset \g$ is the root space corresponding  to ${\alpha}$.
 \item Let $C\in \fz$ denote the Casimir element.

\item We choose $E_{\alp}\in \fg_{\alp}$, for any $\alp \in \Phi$ such that $C=\sum_{\alp \in \Phi^+} E_{-\alp}E_{\alp}+D$, where $D$ is in the universal enveloping algebra of the Cartan subalgebra $\ft$.

\item Let $\cO\subset G$ be a $B\times H$-\DimaA{double coset. Consider the left action of $\fu$ on $\cO$ and define} $$\cO_c:=\left \{x \in \cO \, |\, \sum_{\alp \in \Delta} d\psi(E_\alp)E_{-\alp}x \in T_x\cO \right \}$$ and $\cO_s:=\cO\setminus \cO_c$.
\end{itemize}
\end{notation}

We will need the following lemmas, that will be proved in subsequent subsections.
\DimaA{
\begin{lemma}\label{lem:WF}
Let $x\in G$. Let $\xi$ be a $\fz$-eigen $(U\times H,\psi \times \chi)$ equivariant distribution on $G$. Then $SS_x(\xi)\subset CN_{BxH,x}$.
\end{lemma}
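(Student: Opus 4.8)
We want to bound $SS_x(\xi)$ inside $CN_{BxH,x}^G$ for a $\fz$-eigen, $(U\times H,\psi\times\chi)$-equivariant distribution $\xi$. The strategy is to intersect two pieces of information about the singular support: the one coming from equivariance, and the one coming from the $\fz$-eigenproperty (nilpotence of the symbol).

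First I would unwind the equivariance. Under the right action, $\xi$ is literally $(H,\chi)$-equivariant, so by Lemma~\ref{Ginv} (applied to the right action, accounting for the character twist, which does not affect the singular support) we get that every covector in $SS_x(\xi)$ annihilates $\fh x$ (the tangent space to the right $H$-orbit). Under the left action $\xi$ transforms by the character $\psi$ of $U$. Here a character twist \emph{does} matter: left-$U$-equivariance with character $\psi$ forces covectors in $SS_x(\xi)$ to lie not in the full conormal to the left $U$-orbit but in an affine-shifted version — concretely, if $\phi\in SS_x(\xi)$ then for all $n\in\fu$ one has $\phi(nx) = d\psi(n)$ (up to sign/normalization conventions; I'll fix these when writing). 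So $SS_x(\xi)$ is contained in $\{\phi : \phi|_{\fh x} = 0,\ \phi(nx)=d\psi(n)\ \forall n\in\fu\}$. Intersecting these, $SS_x(\xi)$ sits inside an affine subspace of $T_x^*G$ which, after translating by any particular solution, is exactly $CN_{(U\cdot x\cdot H),x}^G = (\fu x + \fh x)^\perp$.

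Second, I would bring in Lemma~\ref{lem:nilp}: identifying $T^*G\cong G\times\fg^*$ via the \emph{right} action, $SS(\xi)\subset G\times\cN$, so the $\fg^*$-component of every covector in $SS_x(\xi)$ is a nilpotent element $\lambda\in\cN$. Now I combine: translating back to $T_x^*G$, the right-action identification sends $\lambda$ to the covector $\phi$ with $\phi(v) = \lambda(\mathrm{Ad}(x^{-1})$-type pullback of $v)$; the point is that the two conditions $\phi(nx)=d\psi(n)$ for $n\in\fu$ become conditions on $\lambda$: its pairing against a certain unipotent subalgebra of $\fg$ (the $\mathrm{Ad}(x^{-1})$-image of $\fu$) must equal a fixed non-degenerate-character functional, while $\lambda|_{\mathrm{Ad}(x^{-1})\fh}=0$. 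The key geometric input is that $BxH$ being \emph{not} the open double coset at $x$, or rather the whole statement being about all $x$, means $\fu x + \fh x$ may be a proper subspace of $T_xG$; but the claim $SS_x(\xi)\subset CN_{BxH,x}^G=(\fb x + \fh x)^\perp$ additionally demands the covectors annihilate $\ft x$, i.e. the Cartan directions. That extra annihilation is exactly where nilpotence must be exploited.

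**Where the work concentrates.** The main obstacle is upgrading "$\phi$ annihilates $\fu x + \fh x$ and has nilpotent right-symbol and pairs with $\fu x$ via $d\psi$" to "$\phi$ annihilates all of $\fb x + \fh x$", i.e. killing the torus directions $\ft x$. I expect this to go by a weight/grading argument: pick a generic cocharacter of $T$ so that $\fg = \bigoplus \fg_i$ is graded with $\fu = \bigoplus_{i>0}\fg_i$; a covector $\phi$ with nilpotent symbol $\lambda$ lies in $\bigoplus_{i\le 0}\fg_i^*$ after conjugation (or, $\lambda$ is conjugate into the union of nilpotent orbits meeting $\bar{\fu}$), whereas the constraint $\phi(nx)=d\psi(n)$ for $n$ in simple root spaces pins the "lowest" components. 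Comparing the degree-$0$ part of $\lambda$ against the requirement that $\lambda$ be nilpotent (hence its semisimple/$\ft^*$-part vanishes) should force $\phi(\ft x)=0$. The element $C=\sum_{\alpha\in\Phi^+}E_{-\alpha}E_{\alpha}+D$ introduced in the notation, and the defining condition of $\cO_c$ involving $\sum_{\alpha\in\Delta}d\psi(E_\alpha)E_{-\alpha}x$, strongly suggest that the actual proof computes the principal symbol of $C-\lambda(C)$ acting on $\xi$ near $x$ and reads off that any $\phi\in SS_x(\xi)$ must satisfy the equation $\sum_{\alpha\in\Phi^+}\phi(E_{-\alpha}x)\phi(E_{\alpha}x) + (\text{Cartan part}) = 0$; combined with $\phi|_{\fh x}=0$ and the left-equivariance values $\phi(E_\alpha x)=d\psi(E_\alpha)$ for simple $\alpha$, this quadratic relation is what forces $\phi\in CN_{BxH,x}^G$. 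So concretely I would: (1) set up the two symbol constraints as above; (2) write out the Casimir symbol equation at $x$; (3) do the root-space bookkeeping to conclude $\phi$ annihilates $\ft x$, hence $\phi\in(\fb x+\fh x)^\perp = CN_{BxH,x}^G$.
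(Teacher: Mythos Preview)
Your proposal contains a genuine misconception that sends you down an unnecessarily complicated path. You claim that left $(U,\psi)$-equivariance forces an \emph{affine} constraint on the singular support, namely $\phi(nx)=d\psi(n)$ for $n\in\fu$. This is wrong: the singular support is a conic subvariety of $T^*G$ (closed under scaling in the fibers), so it cannot be contained in a proper affine subspace. Concretely, $(U,\psi)$-equivariance means that for each $n\in\fu$ the first-order operator $L_n - d\psi(n)$ annihilates $\xi$; but the principal symbol of $L_n - d\psi(n)$ equals the principal symbol of $L_n$, since $d\psi(n)$ is a zeroth-order term. Thus Lemma~\ref{Ginv} applies verbatim to character-equivariant distributions, and you get the honest linear constraint $SS_x(\xi)\subset(\fu x + \fh x)^\perp$, not an affine shift of it.

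Once this is corrected, your elaborate plan involving the Casimir symbol equation and weight gradings is not needed. The paper's proof is two lines: identify $T_x^*G\cong\fg^*$ via right translation, so that $CN_{BxH,x}^G=(\ft+\fu+\ad(x)\fh)^\bot$. Equivariance gives $SS_x(\xi)\subset(\fu+\ad(x)\fh)^\bot$ as above, and Lemma~\ref{lem:nilp} gives $SS_x(\xi)\subset\cN$. The entire remaining content is the elementary identity
\[
(\fu+\ad(x)\fh)^\bot\cap\cN=(\ft+\fu+\ad(x)\fh)^\bot,
\]
which follows from $\fu^\bot\cap\cN=\fb^\bot$: under the Killing form, $\fu^\bot$ corresponds to $\fb$, and the nilpotent elements of $\fb$ are exactly $\fu$, which corresponds back to $\fb^\bot$. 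So the torus directions are killed for free by nilpotence, without any Casimir computation.
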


\begin{lemma}\label{lem:cOProp}
Let $\cO\subset Z$ be a $B\times H$-double coset. Then $\cO_s \neq \emptyset$.
\end{lemma}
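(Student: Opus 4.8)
The plan is to argue by contradiction: suppose $\cO\subset Z$ is a $B\times H$-double coset with $\cO_c=\cO$, and show that this forces $\cO$ to be open in $G$, contradicting $\cO\subset Z$.

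The one non-formal ingredient is a Lie algebra statement. Put $n:=\sum_{\alpha\in\Delta}d\psi(E_\alpha)E_{-\alpha}\in\g$. Since $\psi$ is non-degenerate, $d\psi(E_\alpha)\neq 0$ for every simple root $\alpha$, so $n$ has a nonzero component in every simple negative root space. I claim that the Lie subalgebra $\g_0\subseteq\g$ generated by $\fb$ and $n$ is all of $\g$. Indeed $\g_0\supseteq\ft$, and for a generic $h\in\ft$ the scalars $\{\alpha(h):\alpha\in\Delta\}$ are pairwise distinct (the simple roots being pairwise distinct functionals on $\ft$), so $\ad(h)$ acts on $\bigoplus_{\alpha\in\Delta}\g_{-\alpha}$ with distinct eigenvalues and $n$ is a cyclic vector for it; hence $\{n,\ad(h)n,\ad(h)^2n,\dots\}$ spans $\bigoplus_{\alpha\in\Delta}\g_{-\alpha}$, so each $E_{-\alpha}$ ($\alpha\in\Delta$) lies in $\g_0$. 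Thus $\g_0$ contains $\ft$ (in particular the center of $\g$) together with all $E_{\alpha}$ and $E_{-\alpha}$, $\alpha\in\Delta$; since these generate the reductive Lie algebra $\g$, we get $\g_0=\g$.

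Next I would translate the hypothesis into a statement about vector fields. For $v\in\g$ let $X^v$ be the vector field $x\mapsto vx$ on $G$ coming from the differential of left translation; these are the right-invariant vector fields and satisfy $[X^v,X^w]=\pm X^{[v,w]}$, so $\{X^v:v\in\g\}$ is a Lie subalgebra of vector fields, and by the previous paragraph it is generated by $\{X^b:b\in\fb\}\cup\{X^n\}$. Now $\cO$ is stable under left multiplication by $B$, so each $X^b$ ($b\in\fb$) is tangent to the submanifold $\cO$ at every point of $\cO$; and the hypothesis $\cO_c=\cO$ says exactly that $X^n$ is tangent to $\cO$ at every point of $\cO$. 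Since the Lie bracket of two vector fields tangent to a submanifold is again tangent to it, it follows that $X^v$ is tangent to $\cO$ for every $v\in\g$. Consequently, for every $x\in\cO$ we have $\{vx:v\in\g\}\subseteq T_x\cO$; but $g\mapsto gx$ is a diffeomorphism of $G$, so its differential at $e$ identifies $\g$ with $T_xG$ and hence $\{vx:v\in\g\}=T_xG$. Therefore $T_x\cO=T_xG$, so $\dim\cO=\dim G$ and $\cO$ is open in $G$ — contradicting $\emptyset\neq\cO\subset Z$. This gives $\cO_c\neq\cO$, i.e. $\cO_s\neq\emptyset$.

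I expect the only real point to verify is the generation claim in the second paragraph, which is where the non-degeneracy of $\psi$ is essential: if some $d\psi(E_\alpha)$ vanished, then $\fb$ and $n$ would together lie in a proper parabolic subalgebra and the argument would collapse. Everything else is soft — the fact that tangency to a submanifold is preserved by Lie brackets, together with transitivity of left translation on $G$. (Note that sphericity of $H$ is not used in this lemma; it is needed elsewhere in the proof of Theorem~\ref{thm:main}.)
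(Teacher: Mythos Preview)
Your argument is correct, and it takes a genuinely different route from the paper's. The paper passes to the quotient $Y=G/H$, first proving an auxiliary density lemma (if subgroups $K\subset K_i$ with the $K_i$ generating $G$ satisfy $\overline{Ky}=\overline{K_iy}$ for all $i$, then $Ky$ is dense in $Y$), and then deducing that for any $x\in Z'$ there is a simple root $\alpha$ with $\g_{-\alpha}x\nsubseteq \fb x$. Finally it uses the torus: choosing $t\in T$ with $\alpha(t)=1$ and $|\beta(t)|$ small for $\beta\neq\alpha$ produces an explicit point $tx\in\cO'_s$. Your approach replaces all of this by the single Lie-algebra fact that $\fb$ together with the ``principal'' element $n=\sum_{\alpha\in\Delta}d\psi(E_\alpha)E_{-\alpha}$ generate $\g$, extracted via a Vandermonde/$\ad(h)$-eigenvalue argument; the closure of tangency under Lie bracket then forces $\cO$ to be open. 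This is shorter and more conceptual, and it makes the role of non-degeneracy of $\psi$ completely transparent (if some $d\psi(E_\alpha)=0$ then $\fb$ and $n$ lie in the proper parabolic $\mathfrak p_{\Delta\setminus\{\alpha\}}$). What the paper's route buys is explicitness: it actually exhibits a point of $\cO_s$, and the intermediate Lemma~\ref{lem:out} (existence of a simple root escaping $\fb x$) may be of independent use. Your observation that sphericity of $H$ is not used here is also correct; it only enters elsewhere, to guarantee finitely many $B\times H$-double cosets.
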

}



\subsection{Proof of Theorem \ref{thm:main}}


Suppose that there exists a non-zero $\fz$-eigen $(U,\psi)$- equivariant distribution $\xi$  supported on $Z$.

For any \DimaA{$B\times H$-double coset $\cO \subset G$}, stratify $\cO_{c}$ to a union of smooth locally closed varieties $\cO_c^i$.

The collection \DimaA{$$\{\cO_c^i \, | \, \cO \text{ is a }B\times H-\text{double coset}\}\cup \{\cO_s \, | \,\cO\text{ is a } B\times H-\text{double coset}\}$$} is a stratification of \DimaA{$G$}. Reorder this collection to a sequence $\{S_i\}_{i=1}^N$ of smooth locally closed subvarieties of \DimaA{$G$} s.t. $U_k:=\bigcup_{i=1}^k S_i$ is open in \DimaA{$G$} for any $1\leq k \leq N$.

Let $k$ be the maximal integer such that $\xi|_{U_{k-1}}=0$. Let $\eta:=\xi|_{U_{k}}$.
We will now show that $\eta=0$, which leads to a contradiction.
\begin{enumerate}[{Case}  1.]
\item $S_{k}=\cO_s$ for some \DimaA{double coset} $\cO$.

Recall that we have the following decomposition of the Casimir element $$C=\sum_{\alp \in \Phi^+} E_{-\alp}E_{\alp}+D$$ Since $\eta$ is  $\fz$-eigen and $(U,\psi)$-equivariant, we have, for some scalar $\lambda$,
$$\lambda \eta=C\eta=\sum_{\alp \in\Phi^+} E_{-\alp}E_{\alp}\eta+D\eta=\sum_{\alp \in \Phi^+} E_{-\alp}d\psi(E_\alp)\eta +D\eta=\sum_{\alp \in \Delta} E_{-\alp}d\psi(E_\alp)\eta +D\eta$$

%


Let $V:= \sum_{\alp \in \Delta} d\psi(E_\alp)E_{-\alp}$ and
$D':=\lambda Id-D$.
We have $V\eta=D'\eta$, and it is easy to see that $D'$ is tangential to $\cO_s$,
and $V$ is transversal to $\cO_s$. Now, Lemma \ref{lem:TransTan} implies $\eta=0$ which is a contradiction.

\item $S_{k}\subset \cO_c$ for some orbit $\cO$.


%
%

By Corollary \ref{cor:WCI} it is enough to show that for any  $x \in S_k$ we have
\begin{equation}\label{eq:NonCoIsot}
CN_{S_k,x}^{\DimaA{G}} \nsubseteq SS_x(\eta).
\end{equation}

By Lemma \ref{lem:WF}, $ SS_x(\eta)\subset CN_{\cO,x}^{\DimaA{G}}$. By Lemma \ref{lem:cOProp}, $S_{k}\subsetneq \cO$, thus $CN_{S_{k},x}^{\DimaA{G}} \supsetneq CN_{\cO,x}^{\DimaA{G}}$ which implies \eqref{eq:NonCoIsot}.



\end{enumerate}
\proofend

\DimaA{
\subsection{Proof of Lemma \ref{lem:WF}}\label{subsec:PfLemWF}

\begin{proof}
Let $\fh$ denote the Lie algebra of $H$  and $ad(x)\fh$ denote its conjugation by $x$. Identify $T_x^*G$ with $\fg^*$ using  multiplication by $x^{-1}$ on the right. Then $$CN_{BxH,x}^G= (\ft+\mathfrak{u}+ad(x)\fh)^\bot.$$
Since $\xi$ is $\mathfrak{u}\times \fh$-equivariant, Lemma \ref{Ginv} implies that  $SS_x(\xi)\subset (\mathfrak{u}+ad(x)\fh)^\bot$.
Since $\xi$ is also $\fz$-eigen, Lemma \ref{lem:nilp} implies that $SS_x(\xi)\subset \cN $, where $\cN \subset \fg^*$ is the nilpotent cone. Now we have
$$SS_x(\xi)\subset(\mathfrak{u}+ad(x)\fh)^\bot \cap \cN =(\ft+\mathfrak{u}+ad(x)\fh)^\bot=CN_{BxH,x}^G.$$
\end{proof}

\subsection{Proof of Lemma \ref{lem:cOProp}}\label{subsec:cOProp}
First we need the following lemmas and notation.

\begin{lemma}\label{lem:dense}
Let $K\subset K_{i} \subset G$ for $i=1,\dots,n$ be algebraic subgroups. Suppose that $K_i$ generate $G$. Let $Y$ be a transitive $G$ space. Let $y \in Y$. Assume that $Ky$  is  Zariski dense in  $K_i y$ for each $i$. Then $Ky$  is Zariski dense in  $Y$.
\end{lemma}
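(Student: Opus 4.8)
*Let $K\subset K_i\subset G$ for $i=1,\dots,n$ be algebraic subgroups, suppose the $K_i$ generate $G$, let $Y$ be a transitive $G$-space, $y\in Y$, and assume $Ky$ is Zariski dense in $K_iy$ for each $i$. Then $Ky$ is Zariski dense in $Y$.*

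My plan is to reduce to a statement about the $G$-action on the closure of the orbit $Ky$ and then use a connectedness/invariance argument. Here is how I would organize it.

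First I would pass to the Zariski closure $X:=\overline{Ky}\subset Y$, an irreducible closed subvariety (irreducible because $Ky$ is the image of the irreducible variety $K$ under the orbit map $k\mapsto ky$; here one should note that for algebraic group actions we may as well work with connected components, replacing each group by its identity component, since this does not change the closures of orbits — this is the first routine reduction to set up carefully). The goal is to show $X=Y$. I claim it suffices to show $X$ is stable under the action of each $K_i$: if so, then $X$ is stable under the subgroup of $G$ generated by all the $K_i$, which is $G$, so $X$ is a non-empty closed $G$-stable subset of the transitive $G$-space $Y$, hence $X=Y$.

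So the heart of the matter is: $K_iX=X$ for each $i$. Fix $i$. We know $Ky\subset K_iy$ and $\overline{Ky}=\overline{K_iy}$ (this is exactly the density hypothesis, using that $\overline{K_iy}$ is closed and contains $Ky$, combined with $\overline{Ky}\supseteq\overline{K_iy}$ from density). Thus $X=\overline{K_iy}$, which is the closure of a single $K_i$-orbit. Now the closure of an orbit under an algebraic group action is stable under that group: $K_i$ acts on $Y$, the map $K_i\times \overline{K_iy}\to Y$ is continuous, $K_i\cdot(K_iy)=K_iy\subset\overline{K_iy}$, and taking closures (using that $K_i$ is a variety, so we may argue on the irreducible pieces, or simply that $k\cdot\overline{K_iy}=\overline{k\cdot K_iy}=\overline{K_iy}$ for each fixed $k\in K_i$) gives $K_i\cdot\overline{K_iy}\subseteq\overline{K_iy}$, hence $=$. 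Therefore $K_iX=X$, as needed.

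The main obstacle, and the only place where one must be slightly careful, is the interplay between Zariski density and the algebraic-group action over $\mathbb{R}$: orbit maps of real algebraic groups can behave subtly (orbits need not be closed, the real points of a connected group need not be Zariski connected, etc.). The clean way around this is to work with the Zariski closures throughout and to use only the two robust facts invoked above — (a) the closure of an orbit is irreducible and is stable under the acting group, and (b) a non-empty closed invariant subset of a transitive space is the whole space — both of which hold for actions of algebraic groups on varieties without any properness or reductivity assumptions. I would spell out fact (a) as a one-line sublemma and then the argument above is essentially immediate. I do not expect the generating-set hypothesis to cause trouble: once each $X$ is $K_i$-stable, stability under the generated subgroup $\langle K_1,\dots,K_n\rangle=G$ is formal.
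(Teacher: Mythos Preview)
Your argument is correct, and in fact cleaner than the paper's. The paper proceeds by a double induction: first it reduces to the case $n=2$, and then for $n=2$ it sets $O_l:=(K_1K_2)^l y$ and shows by induction on $l$ that $Ky$ is dense in each $O_l$, using manipulations of closures of products. Your route bypasses both inductions by observing directly that the hypothesis $\overline{Ky}=\overline{K_iy}$ already exhibits $X=\overline{Ky}$ as a $K_i$-orbit closure for every $i$, hence $K_i$-stable; stability under the generated group $G$ and transitivity then finish the job in one line. This is a genuinely shorter and more conceptual argument. Two small remarks: the irreducibility of $X$ that you set up is never actually used, so you could drop that paragraph; and you do not need $X$ to be closed in the final step either, since any non-empty $G$-stable subset of a transitive $G$-space is already everything.
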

\begin{proof}
By induction we may assume that $n=2$. Let $$O_l:=\underbrace{K_1K_2\cdots K_1K_2 }_{l \text{ times}}y.$$ It is enough to prove that for any $l$ the orbit $Ky$ is dense in $O_l$. Let us prove it by induction on $l$. Suppose that we have already proven that $Ky$ is dense in $O_{l-1}$. Then
$$\overline{Ky}=\overline{K_2y}=\overline{K_2Ky}=\overline{K_2O_{l-1}y}.$$
Thus $Ky$  is dense in $K_2O_{l-1}$. Similarly, $K_2O_{l-1}$  is dense in $K_1K_2O_{l-1}=O_l$.
\end{proof}


\begin{notn}$\,$
\begin{itemize}
\item Let $Y$ denote the symmetric space $G/H$, and $Z'$ denote the image of $Z$ in $Y$.
\item For a simple root $\alp\in \Delta$, denote by $P_{\alp}\subset G$ the parabolic subgroup whose Lie algebra is $\fg_{-\alp}\oplus \fb$.
\end{itemize}
\end{notn}

\begin{lemma}\label{lem:out}
Let $x \in Z'$. Then there exists a simple root $\al \in \Delta$ such that $\g_{-\al}x \nsubseteq    \fb x$.
\end{lemma}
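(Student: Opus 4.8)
The plan is to argue by contradiction: assume that $\g_{-\al}x\subseteq \fb x$ for every simple root $\al\in\Delta$, and deduce that $x$ lies in an open $B$-orbit on $Y=G/H$, which contradicts $x\in Z'$ (recall that, since the projection $\pi\colon G\to Y$ is open and surjective, $Z'$ is exactly the complement in $Y$ of the union of the open $B$-orbits). The bridge from the infinitesimal hypothesis to this global conclusion will be Lemma \ref{lem:dense}.

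First I would translate the hypothesis into tangent spaces. Identifying $T_xY$ with $\g/\g_x$ via the orbit map, the image of a subspace $\mathfrak m\subseteq\g$ is $\mathfrak m x\subseteq T_xY$, and for an algebraic subgroup $K\subseteq G$ one has $T_x(Kx)=\mathfrak k x$. Since $\mathrm{Lie}(P_\al)=\g_{-\al}\oplus\fb$, the assumption $\g_{-\al}x\subseteq\fb x$ gives $\mathrm{Lie}(P_\al)x=\fb x$, hence $\dim (P_\al x)=\dim (Bx)$. As $Bx$ is an irreducible locally closed subset of the irreducible variety $P_\al x$ of the same dimension, its closure is all of $\overline{P_\al x}$, so $Bx$ is Zariski dense in $P_\al x$, for every $\al\in\Delta$.

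Next I would use that the parabolics $\{P_\al\}_{\al\in\Delta}$ generate $G$: each $P_\al$ contains $B$ and the simple root subgroup $U_{-\al}$, and these together generate $G$ (equivalently, the Lie subalgebra generated by $\fb$ and the $\g_{-\al}$ contains $\ft$, all positive root spaces, and, via iterated brackets of the negative simple root spaces, all negative root spaces, hence is $\g$). Then Lemma \ref{lem:dense}, applied with $K=B$, the $K_i=P_{\al_i}$ running over $\Delta$, the transitive $G$-space $Y=G/H$, and $y=x$, yields that $Bx$ is Zariski dense in $Y$. Since a $B$-orbit is locally closed and $Y$ is irreducible, a dense $B$-orbit is open; thus $Bx$ is open, the corresponding $B\times H$-double coset in $G$ is open, and $x\notin Z'$ — a contradiction, which proves the lemma.

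I do not expect a genuine obstacle here; the essential content is carried entirely by Lemma \ref{lem:dense}. The only points needing a little care are the passage from equality of tangent-space dimensions to Zariski density of $Bx$ in $P_\al x$ (which rests on orbits being locally closed and on irreducibility), the explicit identification $T_x(Kx)=\mathfrak k x$, and spelling out the correspondence between open $B\times H$-double cosets in $G$ and open $B$-orbits in $G/H$, together with whatever connectedness of $G$ (or reduction to $G^\circ$) is needed for $Y$ to be irreducible; all of these are routine.
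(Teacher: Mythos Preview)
Your argument is correct and follows essentially the same approach as the paper: assume the contrary, deduce $T_x(P_\al x)=T_x(Bx)$ and hence that $Bx$ is Zariski dense in each $P_\al x$, then invoke Lemma~\ref{lem:dense} (with $K=B$ and $K_i=P_{\al_i}$) to conclude $Bx$ is dense in $Y$, contradicting $x\in Z'$. You have simply spelled out in more detail the routine points the paper leaves implicit.
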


\begin{proof}
Assume the contrary. Then for any $\al \in \Delta, \,T_x P_{\al}x=T_xBx$. Thus $Bx$ is Zariski dense in $P_{\al}x$. By Lemma \ref{lem:dense} this implies that $Bx$ is dense in $Y$, which contradicts the condition $x \in Z'$.
\end{proof}

\begin{proof}[Proof of Lemma \ref{lem:cOProp}]
Note that $\cO_c$ is invariant with respect to the right action of $H$.
Let $\cO'$ denote the image of $\cO$ in $Y$, and let $\cO_c'$ denote  the image of $\cO_c$ in $Y$. Choose $x\in \cO'$ and
let $a:B \to \cO'$ denote the action map. It is enough to show that $a^{-1}(\cO_c')\neq B$.
\begin{multline*}
a^{-1}(\cO_c')=\{b \in B \, | \, \sum \psi(E_{\alp})E_{-\alp}\in \fg_{bx} + \fb\}=
\{b \in B \, | \, \sum \psi(E_{\alp})ad(b^{-1})E_{-\alp} \in \fg_{x} + \fb\}=\\
\{tu \in B \, | \, \sum \psi(E_{\alp})ad(t^{-1})E_{-\alp} \in \fg_{x} + \fb\}=
\{tu \in B \, | \, \sum \psi(E_{\alp})\alp(t)E_{-\alp} \in \fg_{x} + \fb\}
\end{multline*}

By Lemma \ref{lem:out} we can choose $\alp \in \Delta$ such that $\g_{-\al}x \nsubseteq    \fb x$. For any $\eps>0$ there exists $t \in T$ s.t. $\alp(t)=1$ and $\forall \beta \neq \alp \in \Delta$ we have $|\beta(t)|<\eps$. It is easy to see that for $\eps$ small enough, $t \notin  p^{-1}(\cO_c')$.
\end{proof}
}

\end{document}